\documentclass[a4paper,reqno,11pt]{amsart}
\usepackage[T1]{fontenc}
\usepackage[utf8]{inputenc}
\usepackage[english]{babel}
\usepackage{xspace}
\usepackage{amsfonts}
\usepackage{amsmath}
\usepackage{amssymb}
\usepackage{amsthm}
\usepackage{graphicx}
\usepackage{bbm}
\usepackage{tikz}
\usepackage{subfig}
\usepackage{xfrac}
\usepackage{dsfont}
\usepackage{amsthm}
\usepackage{mathrsfs}
\usepackage{enumitem}
\usepackage{verbatim}
\usepackage{dsfont}
\usepackage{xcolor}
\usepackage{soul}
\usepackage{url}
\usepackage{hyperref}	
\usepackage{cleveref}

\usepackage{mathtools}
\mathtoolsset{showonlyrefs}

\usepackage{geometry}
\theoremstyle{plain}
\newtheorem{theorem}{Theorem}[section]
\newtheorem{proposition}[theorem]{Proposition}
\newtheorem{lemma}[theorem]{Lemma}

\theoremstyle{remark}
\newtheorem{remark}[theorem]{Remark}

\theoremstyle{definition}
\newtheorem{definition}[theorem]{Definition}
\newtheorem{assumption}[theorem]{Assumption}

\numberwithin{equation}{section}

\makeatletter

\newcommand{\numberset}{\mathbb}
\newcommand{\N}{\numberset{N}}
\newcommand{\R}{\numberset{R}}
\newcommand{\C}{\numberset{C}}

\renewcommand{\epsilon}{\varepsilon}

\title[]{Rigorous derivation of an effective model for periodic Schrödinger equations with linear band crossing of Dirac type}

\author[E. Danesi]{Elena Danesi}
\address{E. Danesi: Politecnico di Torino, Dipartimento di Scienze Matematiche ``G.L. Lagrange'', Corso Duca degli Abruzzi, 24, 10129, Torino, Italy}
\email{elena.danesi@polito.it}

\begin{document}

\maketitle

\begin{abstract}
In this paper we consider a family of time--dependent $1-$dimensional cubic Schrödinger equation (NLS) with periodic potential. Exploiting semiclassical scaling and multiscale analysis, we derive an effective nonlinear Dirac equation, which describes the dynamics of solutions to NLS spectrally localized around Dirac points.
\end{abstract}

\section{Introduction and main result}

In this paper we discuss the $1-$dimensional cubic Schrödinger equation
\begin{equation}
\label{NLS}
    i \partial_t \psi = - \partial_x^2 \psi + V(x) \psi + \kappa \vert \psi \vert^2 \psi, \quad (t,x) \in \R \times \R
\end{equation}
where $V(x)$ is a smooth, even, $1-$periodic potential, $\kappa = \pm 1$, with initial datum
\begin{equation}
    \psi(0,x)= \psi_0(x),
\end{equation}
in some suitable frequency regime.
In order to highlight the main features of \eqref{NLS}, let us recall that the Schrödinger operator 
\begin{equation}
    H \coloneqq - \partial_x^2 + V(x)
\end{equation}
with the potential $V$ as before, has a purely absolutely continuous spectrum which also displays a band structure, see Section \eqref{sec:FB}. Heuristically, one then expects that, if the initial datum is spectrally localized in some suitable sense around a point, then the solution to the Cauchy problem associated with \eqref{NLS} is well approximated by that of an effective problem, which depends on the corresponding spectral bands. 


The study of spectrally localized wave packets in periodic media has been extensively investigated in the literature. Effective equations describing the envelope dynamics arise in many regimes (see, for instance, \cite{GMS08, IW10, DH17} and the monograph \cite{Pely11}).

In this work we are interested in the behaviour of solutions localized ``around'' so called \emph{Dirac points}, where a linear band crossing of Dirac type occurs. Near Dirac points, the dispersion relation resembles that of relativistic particles, and the effective dynamics of wave packets are expected to be governed by Dirac type equations.

In the $2-$dimensional setting the linear and nonlinear cases have been discussed, see \cite{FW14w, AS18}, respectively. The authors proved that, denoting by $(k_*, \mu_*)$ a Dirac point and by $\Phi_-(x k_*), \Phi_+(x,k_*)$ the two corresponding Bloch waves (see Section \eqref{subsec:Dirac_points} for the definition), if the initial datum is localized around the two Bloch waves, that is
\begin{equation}
v^{\varepsilon}_{0}(x)\underset{\epsilon\rightarrow0^{+}}{\sim}\sqrt{\varepsilon}\,\big(\psi_{0,-}(\varepsilon x)\Phi_{-}(x,k_*)+\psi_{0,+}(\varepsilon x)\Phi_{+}(x,k_*)\big),
\end{equation}
then the corresponding solution of the linear/nonlinear periodic Schrödinger equations evolves as 
\begin{equation}\label{eq:approxdyn}
v^{\varepsilon}(t,x)\underset{\epsilon\rightarrow0^{+}}{\sim}e^{-i\mu_* t}\sqrt{\varepsilon}\big(\psi_{-}(\varepsilon t,\varepsilon x)\Phi_{-}(x,k_*)+\psi_{+}(\varepsilon t,\varepsilon x)\Phi_{+}(x,k_*)\big),
\end{equation}
where $\psi_-, \psi_+$ are the components of a solution to an effective linear/nonlinear Dirac equation, with initial datum $\psi_{0, \pm} (x)$, on a time interval of order $\mathcal O(\epsilon^{-2+})$ in the linear case and of order $\mathcal O(\epsilon^{-1})$ in the nonlinear one.
Concerning the linear equation we mention also the paper \cite{FLW17} where the stationary case is treated.

More recently, attention has turned to the stationary case in nonlinear regime. In \cite{BDDT26} the existence of a particular type of standing waves $v(x)=e^{-it\mu_*} u(x)$, called \emph{Dirac solitons}, in the $1-$dimensional case is studied. The authors proved that there exists a solution to the stationary NLS \eqref{NLS} of the following form
\[
u_\delta (x) = \sqrt \delta (\Psi_-(\delta x) \Phi_-(x,k_*) + \Psi_+ (\delta x) \Phi_+(x,k_*) + r_\delta (x))
\]
where $r_\delta$ is a lower order correction term as $\delta \to 0$ and $\Psi(y)=(\Psi_-(y), \Psi_+(y))^T$ is a spinor solving an effective stationary nonlinear Dirac equation. 
We recall that, in order to allow the existence of such solutions, both in dimension one and two, one has to open a spectral gap around the energy of the Dirac point, which reflects in considering a small perturbation of the operator $H$, that is $H_\delta = H + \delta W(x)$, where $W \in C^\infty(\R)$ is real--valued and satisfies some symmetries assumptions (we refer to \cite[Section 1.1]{BDDT26} for all the details). Therefore, in the stationary case, the small parameter $\delta$ gives the amplitude of the spectral gap around the energy $\mu_*$ of the Dirac point and $u_\delta$ is actually a solutions of the NLS perturbed with a negligible potential as  $\delta \to 0$. This is, to our knowledge, the only available result of this type, concerning the stationary NLS \eqref{NLS}.\\

The present work aims to fill the gap in the literature in the $1-$dimensional case, by rigorously justify the validity of time--dependent nonlinear Dirac equations (NLD) as effective model for a class of NLS \eqref{NLS}, in time scales of order $\epsilon^{-1}$. This is the content of the following result. 

\begin{theorem}
\label{thm:main}
    Let $V$ satisfies Assumption \eqref{ass:V} and let $(\pi, \mu_*)$ be a Dirac point for the operator $H \coloneqq -\partial_x^2 + V(x)$. Let $\Phi_-(\cdot,k), \Phi_+(\cdot,k)$, $k \in [0,2\pi]$ be the two corresponding Bloch waves as in Section \eqref{subsec:Dirac_points}. Let 
   \begin{gather}
    \label{eq:c_sharp}
    c_\sharp \coloneqq 2i \langle \partial_x \Phi_-(\cdot, \pi) , \Phi_- (\cdot, \pi) \rangle_{L^2([0,1])} = -2i \langle \partial_x \Phi_+(\cdot, \pi) , \Phi_+ (\cdot, \pi) \rangle_{L^2([0,1])},\\[.2cm]
    \label{eq:beta1}
    \beta_1 := \int_0^1 \left| \Phi_+(x,\pi) \right|^2 \left| \Phi_-(x,\pi) \right|^2 dx,\\[.2cm]
    \label{eq:beta2}
    \beta_2:=\int^1_0\overline{\Phi_+}^2(x,\pi)\Phi_-^2(x,\pi)\,dx.
    \end{gather}
Moreover, let $\alpha \in C([0,T]; H^S(\R))^2$ be a solution to the nonlinear Dirac equation \eqref{NLD}
for some $S > s+3$, $s>\frac 12$. Finally, denoting by $\Phi (x,\pi)=(\Phi_-(x,\pi), \Phi_+(x,\pi))$, assume that the initial datum $\psi_0$ satisfies
\begin{equation}
\label{est_inital_datum}
    \lVert \psi_0 -  \sqrt \epsilon \alpha_0(\epsilon x) \cdot \Phi (x, \pi) \rVert_{H^s} \le c \epsilon, 
\end{equation}
for some $c >0$. 
Then, for any $T_* \in [0,T]$, there exists $\epsilon_0 = \epsilon_0(T_*) \in (0,1)$ and a constant $C >0$ such that for all $\epsilon \in (0,\epsilon_0)$, the solution $\psi \in C([0,\epsilon^{-1}T_*]; H^s(\R))$ to the Cauchy problem associated with \eqref{NLS} exists and satisfies
\begin{equation}
    \sup_{0 \le t \le \epsilon^{-1}T_*} \lVert \psi(t, \cdot) - \sqrt \epsilon e^{-it\mu_*} \alpha (\epsilon t, \epsilon x) \cdot \Phi (x, \pi) \rVert_{H^s} \le C \epsilon.
\end{equation}
\end{theorem}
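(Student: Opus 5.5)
We use the standard multiscale approach: build an approximate solution with a small residual, then close an energy estimate for the error on times of order $\epsilon^{-1}$.

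\emph{Ansatz and correctors.} We look for an approximate solution
\[
\psi_{\rm app}(t,x)=\sqrt\epsilon\, e^{-it\mu_*}\Big(\alpha(\epsilon t,\epsilon x)\cdot\Phi(x,\pi)+\epsilon\,\psi_1(\epsilon t,\epsilon x,x)\Big),
\]
with $X=\epsilon x$, $T=\epsilon t$, and $\psi_1$ $1$-antiperiodic (quasi-periodic with $k=\pi$) in the fast variable $x$. Inserting this into \eqref{NLS} and expanding in powers of $\epsilon$, the leading order $\sqrt\epsilon\,(H-\mu_*)(\alpha\cdot\Phi)$ vanishes because $\Phi_\pm(\cdot,\pi)$ are eigenfunctions of $H(\pi)$ with eigenvalue $\mu_*$. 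At order $\epsilon^{3/2}$ we obtain
\[
(H-\mu_*)\psi_1 = -i\partial_T\alpha\cdot\Phi+2\partial_X\alpha\cdot\partial_x\Phi-\kappa|\alpha\cdot\Phi|^2\alpha\cdot\Phi .
\]
The right-hand side must be orthogonal in $L^2([0,1])$ to $\ker(H(\pi)-\mu_*)=\mathrm{span}\{\Phi_-,\Phi_+\}$. This Fredholm condition is exactly \eqref{NLD}. The transport coefficients are $\pm c_\sharp$; the off-diagonal terms $\langle\partial_x\Phi_\mp,\Phi_\pm\rangle$ vanish by the parity symmetries of the Dirac point. The cubic coefficients are $\beta_1,\beta_2$ together with $\int|\Phi_\pm|^4$, and unwanted quartic overlaps vanish by the same symmetries. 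We then solve for $\psi_1$ using the bounded inverse of $H(\pi)-\mu_*$ on the orthogonal complement of the kernel. The equation for $\psi_1$ involves $\partial_T\alpha,\partial_X\alpha$ and cubic terms in $\alpha$, so $\psi_1$ is bounded in $H^{S-1}$ in $X$, uniformly in $T\in[0,T_*]$. Here we use \eqref{NLD} to replace $\partial_T\alpha$ by spatial derivatives and the algebra property of $H^S$.

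\emph{Residual estimate.} Let $\mathrm{Res}=-i\partial_t\psi_{\rm app}+H\psi_{\rm app}+\kappa|\psi_{\rm app}|^2\psi_{\rm app}$. All terms up to order $\epsilon^{3/2}$ cancel by construction. The remainder consists of terms of order $\epsilon^{5/2}$: the terms $\partial_T\psi_1$, $\partial_X^2\alpha$, $\partial_X\psi_1$, and the nonlinear cross terms. Each is a product of a bounded periodic function of $x$ and a function of $\epsilon x$. For such products we use the scaling
\[
\|f(\epsilon\cdot)g(\cdot)\|_{H^s}\lesssim \epsilon^{-1/2}\|f\|_{H^s}\|g\|_{W^{s,\infty}}\quad(\epsilon\le1).
\]
Hence $\sup_{t\le T_*/\epsilon}\|\mathrm{Res}\|_{H^s}\lesssim\epsilon^{2}$. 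This needs roughly $\alpha\in H^{s+2}$ and $\partial_T\psi_1\in H^s$, which in turn needs $\alpha\in H^{s+3}$; this is why $S>s+3$ is assumed. The corrector term satisfies $\|\epsilon^{3/2}\psi_1\|_{H^s}\lesssim\epsilon$, so it can be absorbed into the final $C\epsilon$ bound.

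\emph{Error estimate and the main obstacle.} Write $\psi=\psi_{\rm app}+r$. Then
\[
i\partial_t r=Hr+\kappa\big(|\psi_{\rm app}+r|^2(\psi_{\rm app}+r)-|\psi_{\rm app}|^2\psi_{\rm app}\big)-\mathrm{Res},
\]
and \eqref{est_inital_datum} gives $\|r(0)\|_{H^s}\lesssim\epsilon$. The linear group $e^{-itH}$ is unitary on $L^2$. It is bounded on $H^s$ uniformly in $t$ because $V$ is smooth: $H^s$ norms are equivalent to $\|(H+M)^{s/2}\cdot\|_{L^2}$, which commutes with the flow. By Duhamel and the algebra property of $H^s$ for $s>1/2$, we get the key bound
\[
\|\psi_{\rm app}\|_{L^\infty}\lesssim\sqrt\epsilon .
\]
The linearized terms are then of size $\|\psi_{\rm app}\|_{W^{s,\infty}}^2\|r\|_{H^s}\lesssim\epsilon\|r\|_{H^s}$. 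This gives
\[
\|r(t)\|_{H^s}\le C\epsilon+\int_0^t\Big(C\epsilon\|r\|_{H^s}+C\sqrt\epsilon\|r\|_{H^s}^2+C\|r\|_{H^s}^3+C\epsilon^2\Big)d\tau .
\]
A bootstrap assuming $\|r\|_{H^s}\le 2K\epsilon$, combined with Gronwall over $t\le T_*/\epsilon$, yields $\|r\|_{H^s}\le C e^{CT_*}\epsilon$ for $\epsilon<\epsilon_0(T_*)$. The same bound propagates local well-posedness of \eqref{NLS} up to time $T_*/\epsilon$.

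The main obstacle is the $\epsilon^{-1}$ time scale. The nonlinear and residual contributions are only small enough, at $O(\epsilon)\cdot\|r\|$ and $O(\epsilon^2)$ respectively, because the leading ansatz $\alpha\cdot\Phi$ is cancelled exactly at order $\epsilon^{3/2}$. That cancellation rests on verifying carefully, through the symmetry properties of $\Phi_\pm$ at the Dirac point, that the solvability conditions reproduce precisely \eqref{NLD} with coefficients $c_\sharp$, $\beta_1$, $\beta_2$. The remaining difficulty is handling the $H^s$ norms of products of slow and fast functions with the correct $\epsilon^{-1/2}$ scaling.
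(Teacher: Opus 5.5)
Your proposal is correct. It is essentially the paper's proof written in the original variables instead of the semiclassical ones, since the rescaling \eqref{re-param} is an exact isometry $H^s\to H^s_\epsilon$: you use the same two-term ansatz with Fredholm solvability producing \eqref{NLD}, an $O(\epsilon^2)$ residual, Duhamel with a uniformly $H^s$-bounded linear flow (justified via $(H+M)^{s/2}$ rather than by citation), and a bootstrap--Gronwall argument on the $\epsilon^{-1}$ time scale, expanding the cubic difference by hand instead of using the Moser-type Lemma \ref{Lemma:moser}. One small slip: the time-derivative term in your corrector equation should be $+i\partial_T\alpha\cdot\Phi$ (as in \eqref{eq:X1}); with your sign the solvability condition gives the time-reversed system rather than \eqref{NLD}.
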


\subsection{Strategy} 
We briefly describe here the strategy adopted to prove the main result. 
Following the ideas in \cite{AS18}, where the analogous $2-$dimensional case is studied, we find more convenient  to work in the semiclassical setting. More in detail, given the function $\psi$, we consider the rescaled function
\begin{equation}
\label{re-param}
  \psi^\epsilon (t , x)=  \epsilon^{-\frac 12} \psi (\epsilon^{-1} t, \epsilon^{-1} x).  
\end{equation}
Then, if $\psi$ is a solution of \eqref{NLS} in a time interval $[0,T]$, $\psi^\epsilon$ is a solution, for  $t \in [0,\epsilon T]$, of the semiclassical cubic Schrödinger equation given by
\begin{equation}
\label{res_NLS}
i \epsilon \partial_t \psi^\epsilon = - \epsilon^2 \partial_x^2 \psi^\epsilon + V \big ( \tfrac x\epsilon \big)  \psi^\epsilon + \epsilon \kappa \vert \psi^\epsilon \vert^2 \psi^\epsilon, \quad \psi^\epsilon(0,x)= \psi^\epsilon_0(x)
\end{equation}
where $\kappa = \pm 1$. Clearly, the viceversa holds too. Notice that the factor $\epsilon$ in front of the nonlinear term is due to the scale $ \epsilon^{-\frac 12}$ in \eqref{re-param}. As explained in \cite{AS18}, this is in order to ``compensate'' linear and nonlinear effects and let the Dirac evolution appear.\\
We focus then on \eqref{res_NLS}, we perform a multiscale expansion 
\begin{equation}
    \psi^\epsilon_N (t,x) \coloneqq e^{-i \mu_* \frac t\epsilon} \sum_{n=0}^N \epsilon^n u_n(t,x, \tfrac x \epsilon),
\end{equation}
where we require that, for any $n=0, \dots, N$, $u_n (t,x, \cdot)$ is $\pi-$pseudoperiodic in the variable $y = \frac x \epsilon$. Then, we formally plug this ansatz in \eqref{res_NLS}. This yields
\begin{equation}
    i\partial_t \psi^\epsilon_N - H_\epsilon \psi_N^\epsilon - \epsilon \kappa \vert \psi_N^\epsilon \vert^2 \psi_N^\epsilon = e^{-i \mu_* \frac t\epsilon} \sum_{n=0}^{3N+1} \epsilon^n X_n
\end{equation}
where $H_\epsilon \coloneqq -\epsilon^2 \partial_x^2 + V(\tfrac x \epsilon)$. By solving $X_n =0$, for any $n \le N$ we obtain an approximate solution, which formally solves the rescaled NLS up to an error of order $\mathcal O(\epsilon ^{N+1})$. We observe that in the $1-$dimensional case it is sufficient to take the expansion up to $N=1$. In particular, we will prove in Section \eqref{sec:asymp_exp} that the term $u_0$ is given by
\begin{equation}
    u_0(t,x,\tfrac x \epsilon) = \alpha_- (t,x) \Phi_- (\tfrac x\epsilon, \pi) +  \alpha_+ (t,x) \Phi_+ (\tfrac x\epsilon, \pi) 
\end{equation}
where $\Phi_\pm (\cdot, \pi)$ are the Bloch waves as in Theorem \eqref{thm:main} and $\alpha_\pm (t,x)$ are components of the solution of the nonlinear Dirac equation \eqref{NLD}. To conclude,  we consider the time evolution of the $H^s$-norm of the difference between the exact and the approximate solution of \eqref{res_NLS} and we estimate it combing (linear and nonlinear) functional inequalities and Gronwall's Lemma.

\subsection{Organization of the paper}
The paper is organized as follows. In Section \eqref{sec:preliminaries} we recall the main tools on periodic Schrödinger operators in dimension one, with a focus on Floquet--Bloch theory and Dirac points. Moreover, we describe the functional background on which the analysis will be carried on. In Section \eqref{sec:asymp_exp} we perform the multiscale expansion of the solution to the semiclassical NLS. Section \eqref{sec:NLS} is dedicated to the study of the well--posedness of the NLD. Finally, in Section \eqref{sec:main_thm} we give the proof of Theorem \eqref{thm:main}.

\section{Preliminaries}
\label{sec:preliminaries}

\subsection{Notation}
We fix the following notation:
\begin{enumerate}
    \item we denote by $\N_+ \coloneqq \N \setminus \{0\}$,
    \item given two $\C^2$-valued functions $f = (f_1,f_2)$ and $g = (g_1,g_2)$, we denote $f \cdot g = \sum_{j=1}^2 f_j g_j$,
    \item $L^2(\R)$-Fourier transform and its inverse are given by
    \[
    \mathcal F(f)(\xi) = \frac1{\sqrt {2\pi}} \int_\R e^{-ix\xi} f(x)dx, \quad   \mathcal F^{-1}(g)(x) = \frac1{\sqrt {2\pi}} \int_\R e^{ix\xi} g(\xi)d\xi,
    \]
    \item given a measurable set $X \subseteq \R$ we denote
    \[
    \langle f, g\rangle \coloneqq \int_X f \bar g dx  \quad \text{and} \quad \Vert f \Vert_{L^2(X)}^2 \coloneqq \langle f,f \rangle_{L^2(X)}
    \]
    and, for any $s>0$ the Sobolev spaces $H^s(X)$ are defined in the usual way.
\end{enumerate}

\subsection{Floquet--Bloch theory}
\label{sec:FB}
We recall the main tools of the Floquet--Bloch theory applied to the $1-$dimensional periodic Schrödinger operator
\begin{equation}
\label{def_H}
    H \coloneqq - \frac{d^2}{dx^2} + V(x)
\end{equation}
where $V$ is a smooth, real--valued, $1-$periodic potential. For a complete discussion on this topic we refer the reader to \cite[Chapter 2]{FLW17} and to \cite[Section XIII.16]{RS78} for the general theory of periodic Schrödinger operators. \\

Let us start by introducing the pseudoperiodic spaces. 

\begin{definition}
    Given $k \in \R$, we define the space of $k-$pseudoperiodic $L^2$ functions as the set
    \begin{equation}
    \label{def:L^2_k}
        L^2_k(\R) \coloneqq \{ f \in L^2_{loc} (\R) \colon f(x+1,k)=e^{ik} f(x,k), \, \forall x \in \R \},
    \end{equation}
    endowed with inner product $\langle f,g \rangle_{L^2([0,1])}$ and the associated norm. The Sobolev spaces $H^s_k(\R)$ are then defined in the natural way, for all $s \in \N$.
\end{definition}

Observe that the pseudoperiodic condition is invariant under translations $k \mapsto k + 2\pi$. Therefore, it is natural to work with a fundamental dual period cell, called Brillouin zone, $\mathcal B = [0,2\pi]$.\\

Then, for any $k \in [0,2\pi]$ we denote by $H(k)$ the operator in $L^2_k (\R)$ with the same action of $H$ and domain $H^2_k(\R)$ and we consider the one--parameter family of \emph{Floquet--Bloch eigenvalue problems}
\begin{equation}
    \label{eq:eigen_prob}
\begin{cases}
H(k) \Phi_n(x,k) = \mu_n(k) \Phi_n(x,k),\\
\Phi_n(x+1,k) = e^{ik}\Phi_n(x,k).
\end{cases}
\end{equation}
The operator $H(k)$ is self--adjoint on its domain and has compact resolvent. Therefore, for each $k \in [0,2\pi]$ this elliptic boundary value problem originates a sequence of discrete eigenvalues
\begin{equation}
\label{sequence}
    \mu_1(k) \le \mu_2(k) \le \dots \le \mu_n (k) \le \dots.
\end{equation}
The functions $\mu_n \colon [0,2\pi] \to \R$ are called \emph{dispersion bands} of the operator $H$ and they provide a description of the spectrum of the associated operator; that is 
\begin{equation}
    \sigma (H) = \sigma_{a.c.} (H)  = \bigcup_{n \in \N_+} \mu_n ([0,2\pi]). 
\end{equation}
Moreover, they satisfy some additional properties. For any $n \in \N_+$ there holds
\begin{enumerate}[label=(\roman*)]
   \item $\mu_n(k) = \mu_n(2\pi-k), \quad k \in[0,\pi]$
    \item $\mu_n(\cdot)$ is Lipschitz continuous and analytic in $(0,\pi) \cup (\pi, 2\pi)$, 
 \item $\mu_n(\cdot)$ is monotone (with different monotonicity) in the intervals $[0,\pi]$ and $[\pi, 2\pi]$,
 \item for every $k \in [0,\pi) \cup (\pi,2\pi]$ the inequalities in \eqref{sequence} are strict.
\end{enumerate}
In addition, the corresponding normalized eigenfunctions $\Phi_n(x,k)$ are called \emph{Bloch waves} and the family 
\begin{equation}
    \{\Phi_n(\cdot,k), n \in \N_+, k \in [0,2\pi]\}
\end{equation}
is complete in $L^2(\R)$, in the sense that any $f \in L^2(\R)$ can be decomposed as
\begin{equation}
\label{decomp}
f(x) = \frac 1{2\pi} \sum_{n \ge 1} \int_{\mathcal B} \langle f, \Phi_n(\cdot, k) \rangle_{L^2(\R)} \Phi_n (x,k) dk,
\end{equation}
and, if $f \in H^2(\R)$,
\[
H f = \frac 1{2\pi} \sum_{n \ge 1} \int_{\mathcal B} \langle f, \Phi_n(\cdot, k) \rangle_{L^2(\R)} \mu_n(k) \Phi_n (x,k) dk.
\]

\subsection{Dirac points}
\label{subsec:Dirac_points}

As explained in the Introduction, in this paper we are interested in the case when the dispersion bands exhibit a linear crossing of Dirac type. We give here the definition of Dirac point, i.e. a point in the quasimomentum/energy plane at which there is a crossing of such type, and we discuss the necessary assumption on the potential $V$ for which the existence of this points is guaranteed. \\

\begin{definition}
\label{def:dirac}
    Let $H$ be as in \eqref{def_H} where $V$ is $1-$periodic. We say that a linear band crossing of Dirac type occurs a the quasimomentum $k_* \in [0,2\pi]$ and energy $\mu_*$, or $(k_*, \mu_*)$ is a Dirac point, if the following holds:
    \begin{enumerate}
        \item there exists $n_* \ge 1$ such that $\mu_*= \mu_{n_*} = \mu_{n_* +1}$;
        \item $\mu_*$ is an $L^2_{k_*}$ eigenvalue of multiplicity $2$;
        \item there exists two spaces $H^2_A, H^2_B$ such that $H^2_{k_*} = H_A^2 \oplus H_B^2$, $H: H^2_A \to L^2_A$ and $H : H^2 \to L^2_B$;
        \item there exists an operator $\mathcal S \colon H^2_A \to H^2_B$ and $\mathcal S \colon H^2_B \to H^2_A$, $\mathcal S \circ \mathcal S =I$, such that $[ e^{-ik_* x} H e^{ik_*x}, \mathcal S] =0$;
        \item there exists a function $g_1$ such that 
        \begin{equation}
            \label{ker_cond}
        ker_{L^2_{k_*}} (H-\mu_* I) = Span \{ g_1(x), g_2(x)= \mathcal S [g_1] (x)\}, \quad \langle g_1, g_2 \rangle_{L^2([0,1])}= \delta_{ab}, \, a,b=1,2;
        \end{equation}
       \item there exists $c_\sharp \ne 0$, $\zeta_0 >0$ and two normalized eigenpairs
        \[
        (\Phi_-(x,k), \mu_-(k)) \quad (\Phi_+(x,k), \mu_+(k))
        \]
        and smooth functions $\eta_{\pm}(k)$ with $\eta_{\pm}(0)=0$, defined for $\vert k-k_* \vert < \zeta_0$ and such that
        \begin{equation}
        \label{eq:mu-mu*}
        \mu_\pm(k) - \mu_* = \pm c_\sharp (k -k_*) \big ( 1 + \mu_\pm (k-k_*) \big).
        \end{equation}
    \end{enumerate}
\end{definition}

This is the general definition of a Dirac point. Let us now focus on the Schrödinger periodic operator $H$ such that $V$ satisfies the following

\begin{assumption}
\label{ass:V}
    $V \in C^\infty(\R)$, is real--valued and there exists a real sequence $(V_m)_{m \in 2\N_+} \in l^2$ such that
    \[
    V(x)=\sum_{m \in 2\N_+} V_m \cos(2\pi mx), \quad \forall x \in \R.
    \]
\end{assumption}
Then, for almost every $V$ satisfying assumption \eqref{ass:V}, by \cite[Theorems 3.6-3.7]{FLW17}, there exists a Dirac point in sense of Definition \eqref{def:dirac} with $k_*=\pi$. 

\begin{remark}
    Let us observe that, by the properties of the dispersion bands described in Section \eqref{sec:FB}, a Dirac point can only appear at quasimomentum $k_* = \pi$. Moreover, since the Floquet--Bloch eigenvalue problem \eqref{eq:eigen_prob} is a second order $1-$dimensional ODE, at any quasimomentum $k$ the multiplicity of an eigenvalue is at most two. Therefore, no more than two dispersion bands can collapse at $k$. 
\end{remark}

In this case it is possible to give a precise description of the spaces $H_A^2, H^2_B$ and the operator $\mathcal S$. In order to lighten the presentation we do not discuss them and we refer to \cite[Section 3.2]{FLW17} for all the details.

\begin{remark}
\label{rmk:bloch}
Let us observe that the choice of the family of Bloch waves is unique but for a phase. In particular, it has been proved in \cite[Proposition 2.12]{BDDT26}, that under some suitable assumptions on the potential $V$, one can choose a family of Bloch waves with some additional symmetries. In addition, as proven in \cite[Proposition 3.5]{FLW17}, it is possible, in the $1-$dimensional case, to reparametrize the Bloch waves corresponding to the Dirac point in order to increase the regularity of the dispersion bands at the point $k = \pi$.\\
In this paper, we will not need to exploit these other properties. Therefore, we do not need to specify any particular choice of the family of Bloch waves, but, for simplicity, we adopt the same choice as in \cite{BDDT26}. That is, given the Dirac point $(\pi, \mu_*)$ we denote by $\Phi_+(x,k), \Phi_-(x,k)$ the two normalized Bloch functions such that \eqref{ker_cond} holds with $g_1(x) = \Phi_-(x,k)$ and $g_2(x)= \Phi_+(x,k)= \Phi_-(-x,k)$. Moreover, we also have that $\overline {\Phi_\pm} (x,\pi) = \Phi_\mp (x,\pi)$.
\end{remark}

\subsection{Functional background}
We recall the functional space and the tools we will use to handle the approximate solution, in particular the highly oscillatory Bloch eigenfunctions, in the reparametrization given by \eqref{re-param}.

\begin{definition}
    Let $s \in [0,+\infty)$ and $0<\epsilon\le 1$. We define the scaled Sobolev space 
    \begin{equation}
        H^s_\epsilon (\R) \coloneqq \{ f^\epsilon \in L^2(\R) \colon \Vert f^\epsilon \Vert_{H^s_\epsilon} < + \infty \}
    \end{equation}
    with
    \begin{equation}
        \lVert f^\epsilon \rVert^2_{H^s_\epsilon} \coloneqq \int_{\R} (1 + \vert \epsilon \xi \vert ^2 )^s \vert \hat f(\xi) \vert^2 d\xi.
    \end{equation}
    Here $\hat f$ denotes the usual Fourier transform of $f$ in $L^2(\R)$. Then, we recall that if $s > \frac 12$, the space $H^s_\epsilon$ is a multiplicative algebra. Moreover, the following results hold.
\end{definition}

\begin{lemma}[Gagliardo--Nirenberg inequality]
For any $s > \frac 12$, there exists a constant $C>0$ such that for any $f \in H^s_\epsilon (\R)$ the following holds
\begin{equation}
    \label{eq:GN}
    \lVert f \Vert_{L^\infty} \le C \epsilon^{- \frac 12} \lVert f \rVert_{H^s_\epsilon}
\end{equation}
\end{lemma}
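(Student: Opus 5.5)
The plan is to reduce \eqref{eq:GN} to the classical Sobolev embedding $H^s(\R)\hookrightarrow L^\infty(\R)$, $s>\frac12$, by a rescaling argument. The point is that $\lVert\cdot\rVert_{H^s_\epsilon}$ is precisely the standard $H^s$ norm read at the semiclassical scale. Set $g(y)\coloneqq f(\epsilon y)$. Then $\lVert g\rVert_{L^\infty}=\lVert f\rVert_{L^\infty}$, and $\hat g(\eta)=\epsilon^{-1}\hat f(\eta/\epsilon)$ with the normalization of the Fourier transform fixed in the Notation.

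Changing variables $\xi=\eta/\epsilon$ gives
\begin{equation}
\lVert g\rVert_{H^s}^2=\int_\R(1+|\eta|^2)^s\,\epsilon^{-2}|\hat f(\eta/\epsilon)|^2\,d\eta=\epsilon^{-1}\int_\R(1+|\epsilon\xi|^2)^s|\hat f(\xi)|^2\,d\xi=\epsilon^{-1}\lVert f\rVert_{H^s_\epsilon}^2 .
\end{equation}
Applying the standard embedding to $g$ then yields
\[
\lVert f\rVert_{L^\infty}=\lVert g\rVert_{L^\infty}\le C\lVert g\rVert_{H^s}=C\epsilon^{-\frac12}\lVert f\rVert_{H^s_\epsilon},
\]
with $C$ independent of $\epsilon$, which is the claim.

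To avoid quoting the embedding, I would prove it directly from the same Fourier computation. By Fourier inversion and Cauchy--Schwarz,
\[
|f(x)|\le(2\pi)^{-1/2}\int_\R|\hat f(\xi)|\,d\xi\le(2\pi)^{-1/2}\Big(\int_\R(1+|\epsilon\xi|^2)^{-s}d\xi\Big)^{1/2}\lVert f\rVert_{H^s_\epsilon}.
\]
The last integral equals $\epsilon^{-1}\int_\R(1+|\eta|^2)^{-s}\,d\eta$, which is finite exactly when $2s>1$. This gives the constant explicitly.

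The only delicate points are to keep track of the Jacobian $\epsilon^{-1}$, which produces the factor $\epsilon^{-1/2}$, and to justify inversion pointwise. The latter holds because $\hat f\in L^1$ by the same Cauchy--Schwarz bound, so $f$ has a continuous representative. I do not expect any real obstacle here.
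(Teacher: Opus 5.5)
Your proposal is correct and follows the paper's route: the paper's one-line proof obtains the factor $\epsilon^{-\frac12}$ by rescaling the standard embedding $H^s(\R)\hookrightarrow L^\infty(\R)$, which is exactly your substitution $g(y)=f(\epsilon y)$ together with $\lVert g\rVert_{H^s}=\epsilon^{-\frac12}\lVert f\rVert_{H^s_\epsilon}$. Your additional direct Fourier and Cauchy--Schwarz argument is also correct; it just makes the constant explicit, namely $C=(2\pi)^{-1/2}\big(\int_\R(1+|\eta|^2)^{-s}d\eta\big)^{1/2}$.
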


\begin{proof}
    The factor $\epsilon^{-\frac 12}$ is obtained by scaling by the standard Gagliando--Nirenberg inequality.
\end{proof}

\begin{lemma}[Moser--type Lemma]
\label{Lemma:moser}
    Let $R >0$, $ s \in [0, +\infty)$ and $\mathcal N (z)=\kappa \vert z \vert^2 z$ with $\kappa \in \R$. Then, there exists a constant $c_m = c_m(R,s,\kappa) > 0$ such that if
    \[
    \lVert (\epsilon \partial)^\gamma f \rVert_{L^\infty} \le R, \quad \forall \gamma \le s, \quad \text{and} \quad \lVert g \rVert_{L^\infty} \le R,
    \]
    then
    \[
    \lVert \mathcal N (f+g) - \mathcal N (f) \rVert_{H^s_\epsilon} \le c_m \Vert g \Vert_{H^s_\epsilon}.
    \]
\end{lemma}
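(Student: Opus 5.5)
We need to prove the Moser-type lemma in the scaled Sobolev space $H^s_\epsilon$. The plan is to reduce it by rescaling to the classical Moser estimate in the standard space $H^s$, where the same type of inequality holds with constants depending only on $R$, $s$ and $\kappa$.

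\textbf{Step 1 (reduction by scaling).} Define the dilation $(D_\epsilon f)(y) := f(\epsilon y)$. A change of variables in the Fourier transform gives $\widehat{D_\epsilon f}(\eta) = \epsilon^{-1}\hat f(\eta/\epsilon)$. Therefore
\[
\lVert D_\epsilon f\rVert_{H^s}^2 = \epsilon^{-1}\lVert f\rVert_{H^s_\epsilon}^2 .
\]
Moreover, $\partial_y^\gamma (D_\epsilon f) = D_\epsilon\big((\epsilon\partial)^\gamma f\big)$. Hence the hypotheses transfer to the rescaled functions: $F := D_\epsilon f$ satisfies $\lVert \partial^\gamma F\rVert_{L^\infty}\le R$ for $\gamma\le s$, and $G := D_\epsilon g$ satisfies $\lVert G\rVert_{L^\infty}\le R$. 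Since $\mathcal N$ acts pointwise, it commutes with $D_\epsilon$. It therefore suffices to prove the $\epsilon=1$ inequality
\[
\lVert \mathcal N(F+G)-\mathcal N(F)\rVert_{H^s}\le c_m\lVert G\rVert_{H^s}.
\]
Multiplying both sides by the common factor $\epsilon^{1/2}$ then gives the statement, with a constant independent of $\epsilon$.

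\textbf{Step 2 (algebraic expansion).} Write $\mathcal N(z)=\kappa z^2\bar z$ and expand:
\[
\mathcal N(F+G)-\mathcal N(F)=\kappa\big(2|F|^2G+F^2\bar G+2F|G|^2+\bar F G^2+|G|^2G\big).
\]
Each term is a product containing at least one factor $G$ or $\bar G$. The remaining factors are copies of $F$, $\bar F$, $G$, $\bar G$.

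\textbf{Step 3 (the estimates).} First, the terms linear in $G$ are handled by multiplication by a $W^{\lceil s\rceil,\infty}$ function. By Leibniz's rule for integer $s$, and by interpolation for non-integer $s$, we have
\[
\lVert aG\rVert_{H^s}\lesssim \Big(\sum_{\gamma\le s}\lVert\partial^\gamma a\rVert_{L^\infty}\Big)\lVert G\rVert_{H^s},
\]
applied with $a=|F|^2$ or $a=F^2$, whose derivatives are bounded by $C(R)$.

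Second, the terms containing two or three factors of $G$ require the tame product (Kato–Ponce/Moser) estimate
\[
\lVert uv\rVert_{H^s}\lesssim \lVert u\rVert_{L^\infty}\lVert v\rVert_{H^s}+\lVert u\rVert_{H^s}\lVert v\rVert_{L^\infty}.
\]
Applied to $F|G|^2$, $\bar F G^2$ and $|G|^2G$, this gives bounds of the form $C(R)\lVert G\rVert_{H^s}$, using $\lVert G\rVert_{L^\infty}\le R$. In doing so, an $H^s$ norm must never land on $F$, which need not lie in $L^2$. To avoid this, treat $F$ and $\bar F$ always as $W^{s,\infty}$ multipliers, as in the first bound, and apply the tame estimate only to products of the $G$-factors, for example $\lVert G^2\rVert_{H^s}\lesssim \lVert G\rVert_{L^\infty}\lVert G\rVert_{H^s}$.

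\textbf{Main obstacle.} The delicate point is non-integer $s$ with only $L^\infty$ control on derivatives up to order $s$. For the multiplier bound in this case I would use Littlewood–Paley paraproducts, or complex interpolation between $\lfloor s\rfloor$ and $\lceil s\rceil$. If the hypothesis is read as requiring $\gamma$ up to $\lceil s\rceil$, interpolation suffices. I would also verify that $\kappa$ enters the constant only linearly.
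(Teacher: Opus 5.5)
Your argument is correct and is essentially the standard proof. The paper gives no proof of its own and defers to \cite[Lemma 8.1]{Rau99}. Your route is exactly how that Moser estimate is obtained: reduce to $\epsilon=1$ via $D_\epsilon h=h(\epsilon\,\cdot)$, using $\|D_\epsilon h\|_{H^s}=\epsilon^{-1/2}\|h\|_{H^s_\epsilon}$ and $\partial^\gamma D_\epsilon=D_\epsilon(\epsilon\partial)^\gamma$. Then expand the cubic, use multiplier bounds for the $F$-factors and tame product bounds for the $G$-factors, and never put an $H^s$ norm on $F$. For integer $s$ this is a complete proof.

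Your ``main obstacle'' paragraph should be settled the other way. When only integer $\gamma\le s$ are controlled and $s\notin\N$, no paraproduct argument can work, because the statement is then false. Take $\epsilon=1$, $k=\lfloor s\rfloor$, $\kappa\neq0$, a fixed nonnegative Schwartz function $g\not\equiv0$, and $f_n=1+n^{-k}\sin(nx)$. The hypotheses hold with $R=\max(2,\|g\|_{L^\infty})$ uniformly in $n$. Yet $\mathcal N(f_n+g)-\mathcal N(f_n)=\kappa(3f_n^2g+3f_ng^2+g^3)$ contains the piece $3\kappa n^{-k}\sin(nx)(2g+g^2)$. This piece is concentrated at frequencies near $\pm n$, and nothing else cancels it, since the remaining terms live near frequencies $0$ and $\pm 2n$. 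Its $H^s$ norm grows like $n^{s-k}\to\infty$, while $\|g\|_{H^s}$ stays fixed.

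The same example shows that your displayed multiplier bound, with $\sum_{\gamma\le s}\|\partial^\gamma a\|_{L^\infty}$, fails for non-integer $s$. It must read $\sum_{\gamma\le\lceil s\rceil}$. With that correction, your interpolation between $H^{\lfloor s\rfloor}$ and $H^{\lceil s\rceil}$ closes the argument.

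That corrected reading is also what the paper actually uses. In the proof of Theorem~\ref{thm:main} the lemma is applied with $f=\psi^\epsilon_a$. Lemma~\ref{lem:est_psi_a} bounds $(\epsilon\partial)^\gamma\psi^\epsilon_a$ in $L^\infty$ for all integer $\gamma\le S-2$, and $S-2>s+1$ gives $\lfloor S-2\rfloor\ge\lceil s\rceil$.
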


For the proof of Lemma \eqref{Lemma:moser} we refer to \cite[Lemma 8.1]{Rau99}.

\section{Two--scale asymptotic expansion}
\label{sec:asymp_exp}
We perform the asymptotic expansion of the solution to \eqref{res_NLS}. We consider
\begin{equation}
\label{ansatz}
\psi_a^\epsilon (t,x,\tfrac x \epsilon)= e^{-i \mu_* \frac t\epsilon} \big ( u_0 (t,x,\tfrac x \epsilon) + \epsilon u_1(t,x,\tfrac x \epsilon) \big),
\end{equation}
where $u_n$, $n=0,1$ are $\pi-$pseudo-periodic function with respect to the variable $y\coloneqq \frac x\epsilon$.\\
In order to identify the functions $u_n (t,x,y)$, $n=0,1$ we plug \eqref{ansatz} into \eqref{res_NLS}, treating $x$ and $y $ as independent variables. This yields 
\begin{align*}
i\partial_t \psi^\epsilon_a - H^\epsilon \psi_a^\epsilon - \epsilon \kappa \vert \psi_a^\epsilon \vert^2 \psi_a^\epsilon = e^{-i \mu_* \frac t\epsilon} \sum_{n=0}^4 \epsilon^n X_n
\end{align*}
where, denoting by $H = -\partial_y^2 + V(y)$
\begin{align}
X_0 = &(\mu_* -H)u_0\\
 X_1 =  &(\mu_* -H)u_1  + (i \partial_t + 2\partial_x \partial_y - \kappa \vert u_0 \vert^2) u_0 \\
\label{formula:rho} X_2 + X_3 + X_4 \eqqcolon  \rho(\Psi^\epsilon) = & \, \epsilon^2 \big ( (i \partial_t + 2\partial_x \partial_y - \kappa \vert u_0 \vert^2) u_1 + \partial_x^2 u_0 - \kappa 2 \Re(\bar u_0 u_1) u_0 \big ) + \\
 &+  \epsilon^3 \big ( \partial_x^2 u_1 - \kappa (u_0 \vert u_1 \vert^2 + 2 \Re (\bar u_0 u_1)u_1 ) \big )  - \epsilon^4 \kappa \vert u_1 \vert^2 u_1.
\end{align}
Then, we solve 
\[
X_0 = 0.
\]
That is, $u_0$ is a linear combinations of $\Phi_-(y,\pi), \Phi_+(y,\pi)$, solutions of the eigenvalue problem \eqref{eq:eigen_prob}.
Therefore, we have that
\begin{equation}
\label{formula:u_0}
u_0 (t,x,y)= \alpha_- (t,x) \Phi_-(y,\pi)+ \alpha_+(t,x) \Phi_+(y, \pi) 
\end{equation}
where $\alpha_j$, $j=\pm$ to be chosen. We look now at the first order in $\epsilon$ and we solve $X_1=0$, i.e.
\begin{align}
\label{eq:X1}
(H-\mu_*) u_1 = (i\partial_t + 2 \partial_x \partial_y - k \vert u_0 \vert^2 ) u_0.
\end{align}
Observe that $H$ is a Fredholm operator then by Fredholm's alternative, \eqref{eq:X1} admits solutions if and only if the right hand side is orthogonal to $ker_{L^2_{k_*}}\big  (-\partial_y^2 + V(\cdot) - \mu_*\big )$. Thus we impose 
\[
\langle (i\partial_t + 2 \partial_x \partial_y - k \vert u_0 \vert^2 ) u_0, \Phi_j (\cdot,\pi) \rangle_{L^2_y([0,1])} =0, \quad j=\pm.
\]
Taking $c_\sharp$ as in \eqref{eq:c_sharp}, we have 
\begin{align*}
    2 \langle \partial_x \partial_y u_0, \Phi_j(k,\cdot) \rangle = \pm i c_\sharp \partial_x \alpha_j (t,x),
\end{align*}
whereas, by orthogonality of $\Phi_- (y,\pi)$, $\Phi_+(y,\pi)$
\begin{align*}
 i  \langle  \partial_t u_0, \Phi_j (\cdot, \pi) \rangle = i \partial_t \alpha_j (t,x).
\end{align*}
We now focus on the last term, $- \kappa \langle \vert u_0 \vert^2 u_0, \Phi_j (\cdot, \pi) \rangle$. We observe that, due to the symmetries of the Bloch functions, described in Remark \eqref{rmk:bloch},
\begin{align*}
    \int_0^1 \lvert \Phi_j (y,\pi) \vert^2 \Phi_j (y,\pi) \overline{\Phi_{j'}}(y,\pi) dy & = \int_0^1 \Phi_j^2(y,\pi) \overline{\Phi_{j'}}(y,\pi) \overline{\Phi_j}(y,\pi) dy \\
    & = \int_0^1 \vert \Phi_j (y,\pi) \vert^2 \Phi_{j'} (y,\pi) \overline{\Phi_j} (y,\pi) dy =0
\end{align*}
for any $j=\pm$ and $j' = -j$. Moreover, given
 \begin{gather}
    \label{eq:beta1}
    \beta_1 := \int_0^1 \left| \Phi_-(x,\pi) \right|^2 \left| \Phi_+(x,\pi) \right|^2 dx,\\[.2cm]
    \label{eq:beta2}
    \beta_2:=\int^1_0\overline{\Phi_-}^2(x,\pi)\Phi_+^2(x,\pi)\,dx,
    \end{gather}
   it holds $\beta_2 \in \R$, as shown in \cite[Remark 2.15]{BDDT26}.
Therefore, we have that if $j=-$ 
\begin{align*}
    - \kappa \langle \vert u_0 \vert^2 u_0, \Phi_-(\cdot, \pi) \rangle  = - \kappa \big (\bar \alpha_- ( \beta_2 \alpha_+^2 + \beta_1 \alpha_-^2 ) + 2\beta_1 \alpha_- \vert \alpha_+ \vert^2 \big)
\end{align*}
and, if $j=+$
\begin{align*}
 - \kappa \langle \vert u_0 \vert^2 u_0, \Phi_+(\cdot, \pi) \rangle  = - \kappa \big ( 2 \beta_1 \alpha_+ \vert \alpha_- \vert^2 + \bar \alpha_+ ( \beta_2 \alpha_-^2 + \beta_1 \alpha_+^2 ) \big ).   
\end{align*}
Summing up, if we choose $\alpha = (\alpha_-, \alpha_+)$ to be a solution of the equation
\begin{equation}
\label{NLD}
    i \partial_t \alpha = -i c_\sharp \sigma_3 \partial_x \alpha +\kappa \, \mathcal G_{\beta_1, \beta_2} (\alpha) \alpha
\end{equation}
where
\begin{equation}
  \mathcal G_{\beta_1, \beta_2} (\alpha) =  \begin{pmatrix}
      \beta_1 (\vert \alpha_- \vert^2 + 2 \vert \alpha_+ \vert^2) & \beta_2 \bar \alpha_- \alpha_+ \\
      \beta_2 \bar \alpha_+ \alpha_- &  \beta_1 (\vert \alpha_+ \vert^2 + 2 \vert \alpha_- \vert^2)
  \end{pmatrix}, 
\end{equation}
then \eqref{eq:X1} admits solution given by 
\begin{equation}
\label{formula:u_1}
u_1(t,x,y) = (H-\mu_*)^{-1} \big ( ( i \partial_t +2\partial_x \partial_y - \kappa \vert u_0 \vert^2 ) u_0 \big) 
\end{equation}
Let us observe that a generic  solution of \eqref{eq:X1} should be of the form $u_1= \tilde u_1 + u^\perp_1$ where $\tilde u_1 \in ker (H-\mu_*)$ and $u^\perp_1$ as in \eqref{formula:u_1}. However, we can choose to take $\tilde u_1=0$, which simplifies the expression.\\
Concerning the initial data, we assume $\alpha (0,x) \coloneqq \alpha_0 (x)$ to be in the Schwartz space $\mathcal S (\R)^2$. Instead, the initial condition for $u_1$ cannot be freely chosen, but it can be derived from \eqref{formula:u_1} and the initial condition for $u_0$. 

\begin{remark}
Let us conclude by noticing that in the $2-$dimensional case, discussed in \cite{AS18}, this expansion is carried on up to the term of order $\epsilon^2$. In our case, it is sufficient to stop at the first order. As will be clear from the proof of Theorem \eqref{thm:main}, this is due to the scaling factor appearing in the Gagliardo--Nirenberg inequality \eqref{eq:GN}, which depends on the dimension of the physical space $\R^n$. 
\end{remark}

\section{Local well--posedness of the NLD}
\label{sec:NLS}

\begin{proposition}
\label{prop:alpha}
    For any $\alpha_0 \in \mathcal S(\R)^2$ and for any $s > \frac 12$, there exists a positive time $T$ and a unique maximal solution $\alpha \in C\big ([0,T]; H^s(\R) \big )^2 \cap C^1 \big([0,T]; H^{s-1}(\R)\big)^2$ to \eqref{NLD}. 
\end{proposition}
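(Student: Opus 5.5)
We treat \eqref{NLD} as a semilinear evolution equation with a linear part generating a unitary group on $H^s(\R)^2$, and apply a Banach fixed point argument to the Duhamel formulation. The linear operator is $L \coloneqq -c_\sharp \sigma_3 \partial_x$; since $\sigma_3$ is diagonal, the free equation $\partial_t \alpha = -c_\sharp \sigma_3 \partial_x \alpha$ decouples into two transport equations $\partial_t \alpha_\mp = \mp c_\sharp \partial_x \alpha_\mp$. For $c_\sharp \in \R$, which is the case for the Dirac velocity by \eqref{eq:mu-mu*}, the solution is given explicitly by translations:
\[
U(t)\alpha_0 = \big(\alpha_{0,-}(x - c_\sharp t), \alpha_{0,+}(x+c_\sharp t)\big).
\]
In Fourier variables $U(t)$ is multiplication by $\mathrm{diag}(e^{-ic_\sharp t\xi}, e^{ic_\sharp t\xi})$, a unitary group on $H^s(\R)^2$ for every $s$ and strongly continuous in $t$. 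We then rewrite \eqref{NLD} as
\[
\alpha(t) = U(t)\alpha_0 - i\kappa \int_0^t U(t-\tau)\, \mathcal G_{\beta_1,\beta_2}(\alpha(\tau))\alpha(\tau)\, d\tau .
\]

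The nonlinearity $F(\alpha) \coloneqq \mathcal G_{\beta_1,\beta_2}(\alpha)\alpha$ is a cubic polynomial in $\alpha_\pm, \bar\alpha_\pm$ with constant coefficients. Since $H^s(\R)$ is an algebra for $s>\frac12$, we have
\[
\|F(\alpha)\|_{H^s} \le C\|\alpha\|_{H^s}^3, \qquad \|F(\alpha)-F(\beta)\|_{H^s} \le C\big(\|\alpha\|_{H^s}^2+\|\beta\|_{H^s}^2\big)\|\alpha-\beta\|_{H^s}.
\]
Set $R = 2\|\alpha_0\|_{H^s}$ and consider the closed ball of radius $R$ in $C([0,T];H^s)^2$. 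The Duhamel map sends this ball into itself and is a contraction once $T \lesssim (C R^2)^{-1}$. This yields a unique fixed point, and uniqueness in the whole space $C([0,T];H^s)^2$ follows from the Lipschitz bound combined with Gronwall. The maximal solution is obtained in the standard way by iterating: since the existence time depends only on $\|\alpha(t)\|_{H^s}$, either $T_{\max}=\infty$ or $\|\alpha(t)\|_{H^s}\to\infty$ as $t\to T_{\max}$.

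For the $C^1([0,T];H^{s-1})$ regularity, observe that $\partial_x$ maps $H^s$ to $H^{s-1}$ boundedly, so $L\alpha \in C([0,T];H^{s-1})$. Moreover $F(\alpha)\in C([0,T];H^s)$ by the Lipschitz estimate. Differentiating the Duhamel formula in $H^{s-1}$ then shows that $\alpha$ is $C^1$ there and solves \eqref{NLD} in the strong sense.

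The only point requiring care is that $U(t)$ is genuinely unitary, i.e.\ that $c_\sharp$ is real. From \eqref{eq:c_sharp}, integrating by parts with pseudoperiodic boundary terms (which cancel, since $|e^{i\pi}|=1$) shows that $i\langle \partial_x \Phi,\Phi\rangle$ is real, so $c_\sharp\in\R$. If one preferred to avoid this, the energy estimate for $\langle D\rangle^s\alpha$ only needs the skew-adjointness of $\sigma_3\partial_x$, which again reduces to $c_\sharp$ real.

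A further remark concerns propagation of regularity. Since $\alpha_0\in\mathcal S$, it lies in every $H^S$, and the $H^S$ solution does not blow up before the $H^s$ one: the $H^S$ norm grows at most like $\exp(C\int\|\alpha\|_{L^\infty}^2)$ by a Moser-type estimate and Gronwall. Hence the maximal time is independent of $s$, which is what Theorem \eqref{thm:main} needs with $S>s+3$.
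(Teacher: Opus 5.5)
Your proposal is correct and follows essentially the same route as the paper: the unitary group $U(t)=e^{-tc_\sharp\sigma_3\partial_x}$ defined via Fourier multipliers, the Duhamel formulation, the algebra property of $H^s(\R)$ for $s>\frac12$, and a contraction on a ball in $C([0,T];H^s(\R))^2$ with $T\sim R^{-2}$. Your additional points make explicit what the paper only asserts or leaves implicit: that $c_\sharp$ is real (so $U(t)$ is unitary), uniqueness in the whole space via Gronwall, the blow-up alternative that justifies ``maximal'', and the $C^1([0,T];H^{s-1})$ regularity obtained by differentiating the Duhamel formula.
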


\begin{proof}
   Using the Fourier transform, we define for any $f \in H^s(\R)^2$, $s \ge 1$, the strongly continuous unitary group of propagators
   \[
   \big (U(t) f \big) (x) = e^{-t c_\sharp \sigma_3 \partial_x} f (x)= \mathcal F^{-1} \Big ( e^{-it c_\sharp \xi \sigma_3} \hat f(\xi) \Big) (x)
   \]
   satisfying $\Vert U(t) f \Vert_{H^s} = \Vert f \Vert_{H^s}$ for any $s \in \R$. Moreover, we define the Banach space $X = C\big ([0,T]; H^s(\R) \big )^2$, $s > \frac 12$, endowed with the norm
   \[
   \Vert u \Vert_X = \sup_{0 \le t \le T} \Vert u(t) \Vert_{H^s}.
   \]
 By Duhamel's formula, the solution to \eqref{NLD} can be written as
   \begin{align*}
       \alpha (t) = U(t)\alpha_0 -i \kappa \int_0^t U(t-\tau) \mathcal G_{\beta_1, \beta_2} \big ( \alpha (\tau)  \big)\alpha(\tau) d\tau \eqqcolon \Phi (\alpha).
   \end{align*}
 Let $R >0$ and let $T>0$ to be chosen later. We now prove that, for any $\alpha_0$ such that $\alpha_0 \in B_R(0) \subseteq H^s(\R)^2$, the map $\Phi \colon X \to X$ is a contraction on $Y = \{ u \in X \colon \Vert u \Vert_X \le 2R \} $. \\
 We first recall that for any $s > \frac 12$, $H^s(\R)^2 \hookrightarrow L^\infty (\R)^2$ is a commutative algebra such that 
 \[
 \Vert uv \Vert_{H^s} \le C_s \Vert u \Vert_{H^s} \Vert v \Vert_{H^s}, \quad \forall u,v \in H^s(\R)^2.
 \]
 This implies that, for any $u \in H^s(\R)^2$
 \[
 \Vert \mathcal G_{\beta_1, \beta_1}(u) u \Vert_{H^s} \le C_s^2 b \Vert u \Vert^3, \quad b= 3\beta_1 + \vert \beta_2 \vert,
 \]
 and, moreover, for any $u,v \in Y$, given that $\mathcal G_{\beta_1, \beta_2}(z)z \in C^\infty (\C^2)$ 
 \[
 \Vert \mathcal G_{\beta_1, \beta_2} (u)u -  \mathcal G_{\beta_1, \beta_2} (v) v\Vert_{H^s} \le C_{s, \beta_1, \beta_2} R^2 \Vert u-v\Vert_{H^s}.
 \]
 Therefore, for any $\alpha_0 \in H^s(\R^2)^2$ such that $\Vert \alpha_0 \Vert_{H^s} \le R$ and $\alpha \in X$, by Minkowski's inequality, we have 
 \begin{align*}
     \Vert \Phi (\alpha) \Vert_{H^s} & \le \Vert U(t) \alpha_0\Vert_{H^s} + \vert \kappa \vert \int_0^t \big \Vert U(t-\tau) \mathcal G_{\beta_1, \beta_2} \big ( \alpha (\tau) \big) \alpha(\tau)\big \Vert_{H^s} d\tau\\
     & \le R + T 8 C_s^2 b R^3.
 \end{align*}
 Thus, if $T \le \frac 1{8C_s^2 bR^2}$ we have that $\Phi(Y) \subseteq Y$. Let now $\alpha, \beta \in Y$ solutions to \eqref{NLD} such that $\alpha (0) = \beta(0) = \alpha_0$. From previous estimates, we have that 
 \begin{align*}
     \big \Vert \Phi(\alpha) - \Phi(\beta) \big \Vert_{H^s} &\le \int_0^t \big \Vert U(t-\tau) \big (\mathcal G_{\beta_1, \beta_2} ( \alpha ) \alpha - \mathcal G_{\beta_1, \beta_1} (\beta) \beta \big ) \big \Vert_{H^s} d\tau\\
     & \le C_{s, \beta_1, \beta_2} R^2 T \Vert \alpha - \beta\Vert_{H^s}.
 \end{align*}
 Therefore, $\Phi$ is a contraction on $Y$ provided that $0<T < \min \{\frac 1{8C_s^2 bR^2}, \frac 1{2 C_{s,\beta_1,\beta_2} R^2} \}$. Then, for such $T$, by Banach's fixed point theorem, there exists a unique solution $\alpha \in X$ to \eqref{NLD}. \\
 The regularity in time directly follows by \eqref{NLD}.
\end{proof}

\begin{remark}
Let us observe that both the mass and the energy are conserved quantities for the system \eqref{NLD}. However, differently from the focusing NLS discussed in Proposition \eqref{prop:globalwp}, the energy does not have a definite sign, therefore it is not straightforward to extend the solution globally in time. There are cases in which it is still possible to obtain global solution, even for large initial data, but this discussion is beyond the scope of this paper. For a survey on this topic we refer to \cite{Pely} and the reference therein. 
\end{remark}

\section{Proof of the result}
\label{sec:main_thm}

\begin{proposition}[Existence of solution to semiclassical NLS]
\label{prop:semicl_nls}
 Let $V$ satisfies Assumption \eqref{ass:V} and $\psi_0^\epsilon \in H^s_\epsilon (\R)$ for $s > \frac 12$. Then, for any $\epsilon \in (0,1)$ there exists a positive time $T^\epsilon$ and a unique solution $\psi^\epsilon \in C([0,T^\epsilon]; H^s_\epsilon (\R))$ to the Cauchy problem associated with \eqref{res_NLS}. Moreover, there is persistence of regularity; that is, if $\psi_0^\epsilon \in H^{\tilde s}_\epsilon (\R)$ for some $\tilde s \ge s$, then $\psi^\epsilon \in C([0,T^\epsilon]; H^{\tilde s}_\epsilon (\R))$. 
\end{proposition}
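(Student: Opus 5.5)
The plan is to prove local well-posedness of \eqref{res_NLS} in $H^s_\epsilon(\R)$ by a fixed point argument on the Duhamel formulation. This mirrors Proposition \eqref{prop:alpha}, with the free Dirac group replaced by the semiclassical Schrödinger group. Since $\epsilon\in(0,1)$ is fixed, $H^s_\epsilon$ and $H^s$ have equivalent norms, with constants depending on $\epsilon$. These constants are harmless here, because the statement only asks for a time $T^\epsilon$ depending on $\epsilon$. We may therefore work in either norm.

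\textbf{Step 1 (linear propagator).} Let $H_\epsilon = -\epsilon^2\partial_x^2 + V(\tfrac x\epsilon)$. Since $V$ is smooth, bounded and real-valued by Assumption \eqref{ass:V}, $H_\epsilon$ is self-adjoint on $H^2(\R)$. By Stone's theorem, $U_\epsilon(t)=e^{-itH_\epsilon/\epsilon}$ is a unitary group on $L^2$.

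The first point is that $U_\epsilon(t)$ is bounded on $H^s$, uniformly for $t\in[0,1]$. For integer $s=m$ I would argue as follows:
\begin{itemize}
\item the norm $\|(H_\epsilon+M)^{m/2}f\|_{L^2}$, with $M>\|V\|_{L^\infty}+1$, is equivalent to the $H^m$ norm, by elliptic regularity and the fact that $V$ and all its derivatives are bounded;
\item $U_\epsilon(t)$ commutes with $H_\epsilon$, so this equivalent norm is preserved exactly.
\end{itemize}
Non-integer $s$ then follows by complex interpolation. This yields $\|U_\epsilon(t)f\|_{H^s_\epsilon}\le C_{\epsilon,s}\|f\|_{H^s_\epsilon}$. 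I expect this to be the main technical point, because unlike the constant-coefficient case $U_\epsilon$ is not a Fourier multiplier. An alternative, if interpolation is awkward, is to run the argument in $H^m$ for integer $m\ge1$ and treat general $s$ via fractional powers of $H_\epsilon+M$ directly.

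\textbf{Step 2 (contraction).} Define
\[
\Phi(\psi)(t)=U_\epsilon(t)\psi_0^\epsilon - i\kappa\int_0^t U_\epsilon(t-\tau)\,|\psi(\tau)|^2\psi(\tau)\,d\tau
\]
on the ball $Y=\{\|\psi\|_{C([0,T];H^s_\epsilon)}\le 2C_{\epsilon,s}R\}$, where $R=\|\psi_0^\epsilon\|_{H^s_\epsilon}$. Because $H^s_\epsilon$ is an algebra for $s>\frac12$, we have
\[
\||u|^2u\|_{H^s_\epsilon}\le C\|u\|^3_{H^s_\epsilon}, \qquad \||u|^2u-|v|^2v\|_{H^s_\epsilon}\le C(\|u\|^2+\|v\|^2)\|u-v\|.
\]
The constant in the algebra property may depend on $\epsilon$, which is allowed here. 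Exactly as in Proposition \eqref{prop:alpha}, choosing $T^\epsilon\lesssim (C_{\epsilon,s}R)^{-2}$ makes $\Phi$ a contraction on $Y$. Banach's fixed point theorem then gives existence, and uniqueness in $C([0,T^\epsilon];H^s_\epsilon)$ follows from the same difference estimate combined with Gronwall's lemma.

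\textbf{Step 3 (persistence of regularity).} Suppose $\psi_0^\epsilon\in H^{\tilde s}_\epsilon$ with $\tilde s\ge s$. I would use the tame product estimate
\[
\||u|^2u\|_{H^{\tilde s}}\le C\|u\|^2_{L^\infty}\|u\|_{H^{\tilde s}},
\]
which is the Moser-type bound of Lemma \eqref{Lemma:moser} in its linear form. Applied to the Duhamel formula it gives
\[
\|\psi(t)\|_{H^{\tilde s}}\le C\|\psi_0^\epsilon\|_{H^{\tilde s}}+C\int_0^t \|\psi\|^2_{L^\infty}\|\psi\|_{H^{\tilde s}}\,d\tau .
\]
By the Gagliardo--Nirenberg inequality \eqref{eq:GN}, $\|\psi\|_{L^\infty}$ is controlled by the $H^s_\epsilon$ norm, which is bounded on $[0,T^\epsilon]$. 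Gronwall's lemma then bounds the $H^{\tilde s}$ norm on the whole interval $[0,T^\epsilon]$.

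To make this rigorous, I would first construct the solution in $H^{\tilde s}$ on a possibly shorter interval using Step 2 at level $\tilde s$. The a priori bound above then prevents blow-up in $H^{\tilde s}$ before $T^\epsilon$. Continuity in time follows from the strong continuity of $U_\epsilon$ and the integrability of the Duhamel term.
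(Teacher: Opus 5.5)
Your proposal is correct and follows essentially the same route as the paper: a Duhamel/contraction argument built on boundedness of $e^{-itH^\epsilon/\epsilon}$ on $H^s_\epsilon$ and the algebra property, with persistence of regularity coming from a tame (Moser-type) estimate and Gronwall, where the $L^\infty$ norm is controlled at level $s$ (the paper phrases this as an $L^2_tL^\infty$ continuation criterion). The only difference is that the paper quotes \cite[Lemma 4.3]{GMS08} for an $\epsilon$-uniform propagator bound $c_l$, which is used later in the proof of Theorem \ref{thm:main}, whereas your Step 1 proves an $\epsilon$-dependent bound; that bound suffices for this statement, and it becomes uniform if you apply your commutation argument to $-\partial_y^2+V(y)$ and conjugate by the unitary dilation $f\mapsto \epsilon^{1/2}f(\epsilon\,\cdot)$, which maps $H^s_\epsilon$ isometrically onto $H^s$.
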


\begin{proof}
    We define the linear Schrödinger propagator 
    \begin{equation}
        S^\epsilon(t) \coloneqq e^{- i \frac t\epsilon H^\epsilon}
    \end{equation}
   generated by the operator $H^\epsilon = - \epsilon^2 \partial_x^2 + V( \frac x \epsilon)$. It has been proved in \cite[Lemma 4.3]{GMS08} that there exists a constant $c_l >0$ such that 
   \begin{equation}
     \Vert S^\epsilon (t) f \Vert_{H^s_\epsilon} \le c_l  \Vert f \Vert_{H^s_\epsilon}
   \end{equation}
   for all $t \in \R$ and $s \in [0,+\infty)$. With this result, the proof of the local well--posedness of the semiclassical NLS exploits the algebra property of the space $H^s_\epsilon (\R)$, $s > \frac 12$ and follows as in \cite[Proposition 3.8]{terry}. Let us observe that the time of existence $T^\epsilon = \mathcal O(\epsilon ^{-1})$ and  depends only on $c_l, \Vert \Psi_0^\epsilon \Vert_{H^s_\epsilon}$.
   Concerning the persistence of regularity, we observe that, combining the Duhamel's formula and the Gronwall's inequality we have 
   \begin{equation}
       \Vert \psi^\epsilon \Vert_{H^s_\epsilon} \le c_l \Vert \psi_0^\epsilon \Vert_{H^s_\epsilon} e^{c_l \epsilon \Vert \psi^\epsilon \Vert_{L^2_t L^\infty}^2}
   \end{equation}
   where we use the notation $\Vert \cdot \Vert_{L^2_t} = \Vert \cdot \Vert_{L^2([0,t])}$. Conversely, if $\psi^\epsilon \in C_t H^s_\epsilon$, then by Sobolev embedding $\psi^\epsilon \in L^2_t L^\infty$, locally in time. Thus, one can continue a solution in $H^s_\epsilon$ for $s > \frac 12$ if and only if the $L^2_t L^\infty$-norm remain finite. This is independent on $s$. That is, if we also have that the initial datum $\psi_0^\epsilon \in H^{\tilde s}_\epsilon$, then the solution $\psi^\epsilon$ can be continued in the regularity $H^{\tilde s}_\epsilon$ for the same amount of time as it can be continued in $H^s_\epsilon$.
\end{proof}    

\begin{proposition}[Global well--posedness defocusing NLS]
\label{prop:globalwp}
    Let $V$ satisfies Assumption \eqref{ass:V} and $\psi_0^\epsilon \in H^1_\epsilon (\R)$. Then, for any $\epsilon \in (0,1)$ the solution $\psi^\epsilon$ to the Cauchy problem associated with \eqref{res_NLS} with $\kappa =1$ exists globally in time. 
\end{proposition}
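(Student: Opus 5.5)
The plan is the classical energy method: conservation of mass and energy for \eqref{res_NLS} with $\kappa=1$ yields a uniform-in-time bound on the $H^1_\epsilon$ norm, and this bound is then fed into the blow-up alternative from Proposition \eqref{prop:semicl_nls}. That alternative lets the $H^s_\epsilon$ solution be continued as long as $\Vert \psi^\epsilon\Vert_{L^2_tL^\infty}$ stays finite. Taking $s=1>\frac12$, Proposition \eqref{prop:semicl_nls} gives a local solution in $C([0,T^\epsilon];H^1_\epsilon(\R))$, with existence time depending only on $c_l$ and $\Vert\psi_0^\epsilon\Vert_{H^1_\epsilon}$. Hence a uniform a priori bound on $\Vert\psi^\epsilon(t)\Vert_{H^1_\epsilon}$ lets us iterate the local result with a fixed time step, which gives global existence.

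First I show that the mass $M(\psi^\epsilon)=\Vert\psi^\epsilon\Vert_{L^2}^2$ and the energy
\[
E(\psi^\epsilon)=\int_\R \Big(\epsilon^2|\partial_x\psi^\epsilon|^2+V\big(\tfrac x\epsilon\big)|\psi^\epsilon|^2+\frac{\epsilon\kappa}{2}|\psi^\epsilon|^4\Big)\,dx
\]
are conserved. Formally one multiplies \eqref{res_NLS} by $\overline{\psi^\epsilon}$ (respectively by $\overline{\partial_t\psi^\epsilon}$), integrates, and takes the imaginary (respectively real) part. This uses that $H^\epsilon$ is self-adjoint with a real potential. To make the argument rigorous I would work with data in $H^2_\epsilon$, where the solution persists by the persistence of regularity in Proposition \eqref{prop:semicl_nls}, and then pass to $H^1_\epsilon$ data using continuous dependence on the initial datum, which follows from the contraction argument.

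The key step uses the sign $\kappa=1$, under which the quartic term is nonnegative. Since $V$ is smooth and periodic it is bounded, so $V\ge -\Vert V\Vert_{L^\infty}$. Therefore
\[
\epsilon^2\Vert\partial_x\psi^\epsilon(t)\Vert_{L^2}^2\le E(\psi_0^\epsilon)+\Vert V\Vert_{L^\infty}M(\psi_0^\epsilon),
\]
and hence
\[
\Vert\psi^\epsilon(t)\Vert_{H^1_\epsilon}^2\le C\big(E(\psi_0^\epsilon)+(1+\Vert V\Vert_{L^\infty})M(\psi_0^\epsilon)\big).
\]
The initial energy is finite because, by \eqref{eq:GN}, $\Vert\psi_0^\epsilon\Vert_{L^4}^4\le\Vert\psi_0^\epsilon\Vert_{L^\infty}^2\Vert\psi_0^\epsilon\Vert_{L^2}^2\le C\epsilon^{-1}\Vert\psi_0^\epsilon\Vert_{H^1_\epsilon}^4$. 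The resulting bound holds on the whole existence interval and is independent of $t$.

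To conclude, suppose the maximal time $T_{\max}$ were finite. By \eqref{eq:GN}, $\Vert\psi^\epsilon\Vert_{L^2_tL^\infty}^2\le C\epsilon^{-1}T_{\max}\sup_t\Vert\psi^\epsilon\Vert_{H^1_\epsilon}^2<\infty$, which contradicts the continuation criterion. Equivalently, one restarts the local theory at times close to $T_{\max}$ with the uniform existence time attached to the bound above. The main obstacle I expect is the rigorous justification of energy conservation at the $H^1_\epsilon$ level, since the formal computation needs $\partial_t\psi^\epsilon\in L^2$. I would handle this by the regularization argument described in the energy step, together with continuity of $E$ on $H^1_\epsilon$; this continuity follows because $H^1_\epsilon\hookrightarrow L^4$, which comes from \eqref{eq:GN}.
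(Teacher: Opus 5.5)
Your proposal is correct and follows essentially the same route as the paper. Conservation of mass and energy, together with the sign $\kappa=1$, drops the quartic term and controls $\epsilon^2\Vert\partial_x\psi^\epsilon(t)\Vert_{L^2}^2$ by $E(\psi_0^\epsilon)+\Vert V\Vert_{L^\infty}\Vert\psi_0^\epsilon\Vert_{L^2}^2$, and the continuation argument of Proposition \ref{prop:semicl_nls} then gives global existence. Your version is also more careful than the paper's: you include the $L^2$ part of the $H^1_\epsilon$ norm with consistent squares, check that $E(\psi_0^\epsilon)$ is finite via \eqref{eq:GN}, and justify the conservation laws rigorously through $H^2_\epsilon$ regularization and continuous dependence.
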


\begin{proof}
 We start by observing that the mass of the solution is a conserved quantity, that is $\Vert \psi^\epsilon (t) \Vert_{L^2} = \Vert \psi^\epsilon_0 \Vert_{L^2}$. This can be obtained by multiplying the equation \eqref{res_NLS} by $ \bar \psi^\epsilon$, integrating on $\R$ and taking the imaginary part. Moreover, by multiplying the same equation by $\partial_t \bar \psi^\epsilon$ and taking the real part we have
 \begin{equation}
     \frac d{dt} E(\psi^\epsilon (t)) = \frac d{dt} \bigg ( \int_\R \vert (\epsilon \partial_x) \psi^\epsilon \vert^2 dx +  \int_\R V(\tfrac x\epsilon ) \vert  \psi^\epsilon  \vert^2 dx + \frac{\epsilon}2 \int_\R \vert  \psi^\epsilon  \vert^4 dx \bigg ) = 0,
 \end{equation}
 that is, also the energy of the solution is conserved. In particular we deduce
 \begin{align*}
 \Vert \psi^\epsilon  \Vert_{H^1_\epsilon} & = \int_\R \vert (\epsilon \partial_x) \psi^\epsilon \vert^2 dx = E(\psi_0^\epsilon) -   \int_\R V(\tfrac x\epsilon ) \vert  \psi^\epsilon  \vert^2 dx - \frac{\epsilon}2 \int_\R \vert  \psi^\epsilon  \vert^4 dx \\
 & \le E(\psi_0^\epsilon) + \Vert V \Vert_{L^\infty} \lVert \psi_0^\epsilon \Vert_{L^2}.
 \end{align*}
 This means that the $H^1_\epsilon$-norm of $\psi^\epsilon$ remains bounded for all time, with a constant that depends only on $\Vert \psi_0^\epsilon \Vert_{H^1_\epsilon}$. Then, one can repeat the argument in Proposition \eqref{prop:semicl_nls} and extend the solution of all time $t \in \R$. 
\end{proof} 

\begin{lemma}[Estimate of the approximate sol and the remainder]
\label{lem:est_psi_a}

Let $S \in (\tfrac 72, + \infty)$. Let $\alpha_-, \alpha_+ \in  C\big ([0,T]; H^S(\R) \big ) \cap C^1 \big([0,T]; H^{S-1}(\R)\big )$ be the components of the solution to the NLD \eqref{NLD} and $\psi^\epsilon_a$ as in \eqref{ansatz}. Then, for any $t \in [0,T]$ and for any $\gamma \in \N, \, \gamma \le S-2$, the following estimates hold
\begin{equation}
 \Vert \psi^\epsilon_a (t) \Vert_{H^{S-1}_\epsilon} \le c_1, \quad
    \Vert (\epsilon \partial)^\gamma \psi^\epsilon_a (t) \Vert_{L^\infty_x} \le c_2, \quad 
    \Vert \rho (\psi^\epsilon_a)(t) \Vert_{H^{S-3}_\epsilon} \le c_3 \epsilon^2.
\end{equation}
for some constants $c_1, c_2,c_3 >0$ independent on $\epsilon$.
\end{lemma}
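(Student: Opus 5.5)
The plan is to reduce every estimate to a single model bound on two-scale functions of the form $F(x)=a(x)\,\varphi(x/\epsilon)$, with $a\in H^m(\R)$ a slowly varying envelope and $\varphi$ a smooth $\pi$-pseudoperiodic profile (a Bloch wave, one of its $y$-derivatives, or a product of such). For such $F$ we have $\epsilon\partial_x F=(\epsilon a')\varphi(\cdot/\epsilon)+a\,\varphi'(\cdot/\epsilon)$. Iterating this and using that the $H^m_\epsilon$-norm for integer $m$ is equivalent, uniformly in $\epsilon\le1$, to $\sum_{j\le m}\Vert(\epsilon\partial_x)^jF\Vert_{L^2}$, we get
\[
\Vert a\,\varphi(\cdot/\epsilon)\Vert_{H^m_\epsilon}\le C\Vert\varphi\Vert_{C^m}\Vert a\Vert_{H^m},\qquad \Vert(\epsilon\partial)^\gamma (a\,\varphi(\cdot/\epsilon))\Vert_{L^\infty}\le C\Vert\varphi\Vert_{C^\gamma}\Vert a\Vert_{W^{\gamma,\infty}}.
\]
Both constants are independent of $\epsilon$, because each $\epsilon\partial_x$ landing on $a$ carries an extra factor $\epsilon\le1$. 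Non-integer indices follow by interpolation, or we may simply work with $\lfloor\cdot\rfloor$ since only integer $\gamma$ appears in the $L^\infty$ statement. The profiles are harmless: $\Phi_\pm(\cdot,\pi)$ are smooth since $V\in C^\infty$, hence bounded in every $C^m$.

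\textbf{Step 1 (structure of $u_1$).} We write the right-hand side of \eqref{eq:X1} as a finite sum of envelope--profile products. These are $i\partial_t\alpha_\pm\,\Phi_\pm$, $2\partial_x\alpha_\pm\,\partial_y\Phi_\pm$, and the cubic terms $\alpha_{j}\alpha_{k}\bar\alpha_{l}\,\Phi_j\Phi_k\overline{\Phi_l}$. Since $(H-\mu_*)^{-1}$ acts only in $y$ and is bounded on the orthogonal complement of the kernel in $H^m_\pi$ for every $m$ (elliptic regularity), we get
\[
u_1=\sum_\ell a_\ell(t,x)\,\varphi_\ell(y),
\]
where the $\varphi_\ell$ are smooth pseudoperiodic profiles. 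The envelopes are $a_\ell\in\{\partial_t\alpha_\pm,\ \partial_x\alpha_\pm,\ \alpha_j\alpha_k\bar\alpha_l\}$. Using \eqref{NLD} to replace $\partial_t\alpha$ by $-c_\sharp\sigma_3\partial_x\alpha-i\kappa\mathcal G(\alpha)\alpha$ and the algebra property of $H^S$, $S>\frac12$, all these envelopes lie in $C([0,T];H^{S-1})$, with norms bounded by a polynomial in $\sup_t\Vert\alpha\Vert_{H^S}$.

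\textbf{Step 2 (bounds on $\psi^\epsilon_a$).} The factor $e^{-i\mu_*t/\epsilon}$ has modulus one. Applying the model bound to $u_0$ (envelopes in $H^S$) and to $\epsilon u_1$ (envelopes in $H^{S-1}$) gives $\Vert\psi^\epsilon_a\Vert_{H^{S-1}_\epsilon}\le c_1$. For the $L^\infty$ bound we use the ordinary Sobolev embedding $H^{S-1}\hookrightarrow W^{\gamma,\infty}$, valid for $\gamma<S-\frac32$. In particular it holds for integer $\gamma\le S-2$, which gives $c_2$.

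\textbf{Step 3 (the remainder, the main obstacle).} Inspecting \eqref{formula:rho}, every term carries at least $\epsilon^2$. The envelopes involved contain up to two derivatives of $u_1$'s envelopes, namely $\partial_t u_1$, $\partial_x\partial_y u_1$ and $\partial_x^2u_1$. Thus $\partial_x^2\partial_t\alpha$ or $\partial_x^3\alpha$ appears, which lies in $H^{S-3}$. This loss of derivatives is exactly why the statement uses $H^{S-3}_\epsilon$. The delicate point is $\partial_t u_1$: it contains $\partial_t^2\alpha$, which is not directly covered by the assumed regularity $C^1([0,T];H^{S-1})$. We handle it by differentiating \eqref{NLD} in time, so that $\partial_t^2\alpha$ is expressed through $\partial_x^2\alpha$ and cubic and quintic polynomials in $\alpha$ with at most one derivative, all in $H^{S-2}\subset H^{S-3}$.

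The nonlinear terms, such as $|u_0|^2u_1$, $\Re(\bar u_0u_1)u_0$ and $|u_1|^2u_1$, are again sums of envelope--profile products. Their envelopes are products of $H^{S-1}$ functions, which lie in $H^{S-1}$ by the algebra property, and the higher powers of $\epsilon$ are only helpful. Summing the model bound over finitely many terms gives $\Vert\rho(\psi^\epsilon_a)\Vert_{H^{S-3}_\epsilon}\le c_3\epsilon^2$, with $c_3$ depending on $\sup_{[0,T]}\Vert\alpha\Vert_{H^S}$ but not on $\epsilon$. The condition $S>\frac72$ ensures $S-3>\frac12$, so that $H^{S-3}$ is still an algebra when needed.
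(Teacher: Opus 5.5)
Your proposal is correct and follows essentially the paper's route. Both arguments write $u_0$ and $u_1$ as sums of $H^S$ (respectively $H^{S-1}$) envelopes times smooth pseudoperiodic profiles and conclude by Leibniz and Sobolev embedding. The one difference is the $L^\infty$ bound for $\epsilon u_1$: the paper uses the scaled Gagliardo--Nirenberg inequality, trading the factor $\epsilon$ against $\epsilon^{-1/2}$, whereas you apply the Leibniz bound directly to the envelopes.

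Your uniform-in-$\epsilon$ two-scale bound and your derivative count for $\rho$ make explicit what the paper only asserts. One small correction: $(H-\mu_*)^{-1}$ has to be applied to the projections of the profiles onto $\ker(H-\mu_*)^\perp$. This projection kills the $\partial_t\alpha_\pm\Phi_\pm$ terms, so $u_1$ never contains $\partial_t\alpha$, and the $\partial_t^2\alpha$ issue you raise in Step 3 does not actually arise. Your treatment of it via the time-differentiated NLD is nonetheless valid.
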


\begin{proof}
We start by addressing the regularity of $u_0, u_1$. Concerning the former, we recall that, by Proposition \eqref{prop:alpha}, $\alpha_-, \alpha_+ \in  C\big ([0,T]; H^S(\R) \big ) \cap C^1 \big([0,T]; H^{S-1}(\R)\big )$. Moreover, we have that $\Phi_- (\cdot, \pi), \Phi_+(\cdot, \pi) \in C^\infty ([0,1])$ (see \cite[Thm IX.26] {RS75}). Therefore, recalling the expression for $u_0$ stated in \eqref{formula:u_0}, we deduce 
\[
u_0 (t) \in H^S (\R) \times C^\infty ([0,1]), \quad \partial_t u_0 (t) \in H^{S-1}(\R) \times C^\infty ([0,1]).
\]
Concerning $u_1$, by the previous analysis and recalling the formula given in \eqref{formula:u_1}, we have that 
\[
u_1(t,x,y) = \big (H-\mu_*)^{-1} g(t,x,y)
\]
where $H= -\partial_y^2 + V(y)$ and $g(t) \in H^{S-1}(\R) \times C^\infty ([0,1])$. We thus have
\[
u_1(t) \in H^{S-1}(\R) \times C^\infty ([0,1]).
\]
Then, it follows that for any $s \le S-1$,
\[
u_n(t, \cdot, \tfrac{\cdot}{\epsilon}) \in H^{s}_\epsilon (\R) \quad n=0,1,
\]
that is, the first estimate holds true, for some constant $c_1>0$. \\
Let us now focus on the second inequality. We observe that it is sufficient to show that there exists a constant $c>0$ such that 
\begin{equation}
    \label{est:partial_u0}
\lVert (\epsilon \partial)^\gamma u_0 (t) \Vert_{L^\infty} \le c.
\end{equation}
Indeed, by the Gagliardo--Nirenberg inequality \eqref{eq:GN} we easily get that
\[
\lVert \epsilon (\epsilon \partial)^\gamma u_1(t) \rVert_{L^\infty} \le c \epsilon^\frac 12 \Vert u_1 \Vert_{H^{1 + \gamma}_\epsilon} 
\]
which is bounded, by the previous estimate, provided that $\gamma +1 < S-1$.\\
To prove \eqref{est:partial_u0}, we use the Leibniz' rule and we estimate
\begin{align*}
   \lVert (\epsilon \partial)^\gamma u_0 (t) \Vert_{L^\infty} &\le \sum_{j=\pm} \Vert (\epsilon \partial)^\gamma (\alpha_j (t, \cdot) \Phi_j (\tfrac{\cdot}\epsilon, \pi) \Vert_{L^\infty} \lesssim  \sum_{j=\pm} \Big ( \sum_{\sigma \le \gamma} \lVert (\epsilon \partial)^\sigma \alpha_j (t, \cdot) \Vert_{L^\infty} \Vert (\epsilon \partial)^{\gamma -\sigma} \Phi_j (\tfrac{\cdot}\epsilon, \pi) \Vert_{L^\infty} \Big ) \\
   & \lesssim \sum_{j=\pm} \Big ( \Vert \Phi_j (\cdot, \pi) \Vert_{C^\gamma}  \sum_{\sigma \le \gamma} \Vert \partial^\sigma \alpha_j(t, \cdot) \Vert_{L^\infty} \Big ) \lesssim \sum_{j=\pm} \lVert \alpha_j (t, \cdot) \Vert_{H^{1 + \gamma}} \le c
\end{align*}
where we use the standard Gagliardo--Nirenberg inequality and the fact that $\alpha (t) \in H^S(\R)^2$. \\
To conclude, the last inequality of the Lemma follows by recalling the formula for $\rho (\psi^\epsilon_a)$ given in \eqref{formula:rho} and observing that, given $u_0(t), u_1(t) \in H^S$, $X_n \in H^{S-3}_\epsilon(\R)$ for any $n=2,3,4$.
\end{proof}

\begin{proof}[Proof of Theorem \eqref{thm:main}]
As explained in the Introduction, we prove Theorem \eqref{thm:main} in the semiclassical setting, that is, for the rescaled function $\psi^\epsilon$ defined by \eqref{re-param} and then we come back to the standard NLS \eqref{NLS}.\\

Let $\psi^\epsilon$ be the solution of the Cauchy problem associated with \eqref{res_NLS} given by Proposition \eqref{prop:semicl_nls} with initial datum $\psi_0^\epsilon$ such that
\begin{equation}
    \Vert \psi_0^\epsilon - \alpha_0 (x) \cdot \Phi(\tfrac x \epsilon,\pi) \Vert_{H^s_\epsilon} \le c \epsilon
\end{equation}
where $c$ is as in \eqref{est_inital_datum}. 
    We now consider the difference between the exact solution and the approximate one, that is
    \begin{equation}
        \phi^\epsilon = \psi^\epsilon- \psi^\epsilon_a
    \end{equation}
    where $\psi_a^\epsilon$ is defined by \eqref{ansatz}. 
It satisfies 
\begin{equation}
    i \epsilon \partial_t \phi^\epsilon = H^\epsilon \phi^\epsilon + \epsilon \big ( \mathcal N (\phi^\epsilon + \psi^\epsilon_a) - \mathcal N (\psi^\epsilon_a) \big) - \rho (\psi^\epsilon_a), \quad \phi^\epsilon \vert_{t=0} = \phi_0^\epsilon,
\end{equation}
where $H^\epsilon= - \epsilon^2 \partial_x^2 + V (\tfrac{x}\epsilon)$, the nonlinearity is given by $\mathcal N (u)= \kappa \vert u \vert^2 u$ and $\rho (\psi^\epsilon)$ is as in \eqref{formula:rho}. \\
We denote by $w(t) = \Vert \phi (t) \Vert_{H^s_\epsilon}$. We aim to prove that $w(t) \le c \epsilon$, uniformly in $t$. \\
We observe that, by assumption \eqref{est_inital_datum} and Lemma \eqref{lem:est_psi_a}
\begin{equation}
  \label{est_w(0)}  
w(0) \le \lVert \psi_0^\epsilon - u_0(0) \rVert_{H^s_\epsilon} + \epsilon \Vert u_1(0) \Vert_{H^s_\epsilon} \le \tilde c \epsilon.
\end{equation}
Moreover, by Duhamel's formula
\begin{align}
    \phi^\epsilon(t)= S^\epsilon (t) \phi_0 - i \int_0^t S^\epsilon (t-\tau) \Big ( \big ( \mathcal N (\phi^\epsilon + \psi^\epsilon_a) - \mathcal N (\psi^\epsilon_a) \big) - \epsilon^{-1} \rho (\psi^\epsilon_a) \Big ) d\tau.
\end{align}
Therefore,
\begin{align*}
    w(t) \le c_l w(0) + c_l \int_0^t \big \Vert \mathcal N (\phi^\epsilon + \psi^\epsilon_a) - \mathcal N (\psi^\epsilon_a) \big \Vert_{H^s_\epsilon} d\tau  + \epsilon^{-1} c_l\int_0^t \Vert \rho (\psi^\epsilon_a) \Vert_{H^s_\epsilon} d \tau.
\end{align*}
By estimate \eqref{est_w(0)} and Lemma \eqref{lem:est_psi_a}, for any $T^* \in [0,T)$ we have 
\begin{equation}
    w(t) \le c_l( \tilde c + c_3 T^*) \epsilon + c_l\int_0^t \big \Vert \mathcal N (\phi^\epsilon + \psi^\epsilon_a) - \mathcal N (\psi^\epsilon_a) \big \Vert_{H^s_\epsilon} d\tau. 
\end{equation}
To estimate the integral on the RHS we would like to use the Mores--type Lemma \eqref{Lemma:moser}. To do so, we observe that $w(t)$ is continuous in time. Therefore, choosing $M > \max (\tilde c, c_l (\tilde c + c_3 T_*) e^{c_l c_m T_*})$ we have that for any $\epsilon \in (0,1)$ there exists a positive time $t^\epsilon_M >0$ such that $w(t) \le M \epsilon $ for any $t \le t^\epsilon_M$. Then, the Gagliardo--Nirenberg inequality \eqref{eq:GN} yields
\[
\Vert \phi^\epsilon (t) \Vert_{L^\infty} \le C \epsilon^{-\frac 12 } w(t) \le \epsilon^\frac 12 CM
\]
for $t \le t^\epsilon_M$. Hence, there exists $\epsilon_0 \in (0,1)$ such that for any $0<\epsilon\le \epsilon_0$ and for any $t \le t^\epsilon_M$
\[
\Vert \phi^\epsilon (t) \Vert_{L^\infty} \le c_2,
\]
where the constant $c_2$ is an Lemma \eqref{lem:est_psi_a}. Therefore, we can apply Lemma \eqref{Lemma:moser} with $R=c_2$ and we obtain
\begin{equation}
    w(t) \le c_l(\tilde c + c_3 T_*)\epsilon + c_l c_m \int_0^t w(\tau) d\tau, \quad \forall \epsilon \in (0,\epsilon_0], \,  \forall t \le t^\epsilon_M.
\end{equation}
Then, Gronwall's Lemma yields
\begin{equation}
    w(t) \le  c_l(\tilde c + c_3 T_*) e^{c_lc_m T_*}\epsilon, \quad \epsilon \in (0,\epsilon_0], \,   \forall t \le t^\epsilon_M.
\end{equation}
Since we choose $M \ge c_l(\tilde c + c_3 T_*) e^{c_lc_m T_*}$, we have that the necessary assumptions to apply lemma \eqref{Lemma:moser} are fulfilled for any $\epsilon \in (0,\epsilon_0]$ and any $t \le T_*$. Thus, we have that, for any $t \in [0,T_*]$
\begin{align}
\label{fin_est}
    \lVert \psi^\epsilon- e^{-i\frac t\epsilon \mu_*} \alpha (t,\cdot) \cdot \phi (\tfrac{\cdot}\epsilon, \pi) \Vert_{H^s_\epsilon} &\le  \lVert \psi^\epsilon - \psi^\epsilon_a \Vert_{H^s_\epsilon} + \Vert e^{-i\frac t\epsilon \mu_*} \epsilon u_1(t, \cdot, \tfrac{\cdot}{\epsilon} )\Vert_{H^s_\epsilon}\\
    & \le w(t) + \epsilon \Vert  u_1(t, \cdot, \tfrac{\cdot}{\epsilon} )\Vert_{H^s_\epsilon} \le C \epsilon. 
\end{align}
To conclude, let us observe that if $\psi^\epsilon$ is a solution of \eqref{res_NLS} in $C([0,T_*]; H^s_\epsilon (\R))$ then 
\[
\psi(t,x) \coloneqq \sqrt \epsilon \psi^\epsilon (\epsilon  t, \epsilon  x)
\]
is a solution of \eqref{NLS} in $C([0, \epsilon^{-1} T_*]; H^s(\R))$ and for any $t \in [0, \epsilon^{-1}T_*]$
\begin{align*}
  \Vert \psi(t, x) - \sqrt \epsilon e^{-it \mu_*} \alpha (\epsilon t, \epsilon x) \cdot \Phi (x,\pi) \Vert_{H^s_x} &\le  \sqrt{\epsilon} \Vert \psi^\epsilon (\epsilon t, \epsilon x) - e^{-it \mu_*} \alpha (\epsilon t, \epsilon x) \cdot \Phi (x,\pi) \Vert_{H^s_x} \\
  & = \lVert \psi^\epsilon (\epsilon t, x) - e^{-it \mu_*} \alpha (\epsilon t, x) \cdot \Phi (\tfrac{x}{\epsilon}, \pi) \Vert_{H^s_\epsilon} \le C \epsilon
\end{align*}
where the constant $C>0$ is as in \eqref{fin_est} and thus does not depend on $\epsilon$.
\end{proof}

\bigskip

\noindent\textbf{Acknowledgements.} The author acknowledge that this study was carried out within the INdAM - GNAMPA Project CUP E53C25002010001.

\end{document}